\definecolor{webwheel}{rgb}{0,.5,0}
\definecolor{webbrown}{rgb}{.6,0,0}
\begin{document}
\theoremstyle{plain}
\newtheorem{theorem}{Theorem}
\newtheorem{corollary}[theorem]{Corollary}
\newtheorem{lemma}[theorem]{Lemma}
\newtheorem{proposition}[theorem]{Proposition}

\theoremstyle{definition}
\newtheorem{definition}[theorem]{Definition}
\newtheorem{example}[theorem]{Example}
\newtheorem{conjecture}[theorem]{Conjecture}

\theoremstyle{remark}
\newtheorem{remark}[theorem]{Remark}
\newcommand{\A}{\mathbb{N}_0}
\newcommand{\ber}{\mathcal{B}}
\newcommand{\eulr}{\mathcal{E}}
\newcommand{\s}{\mathcal{S}}
\title{Subgroups of the additive group of real line}
\date{}
\author{\baselineskip5pt \sf Jitender Singh\\\small \sf Department of Mathematics, \small \sf Guru Nanak Dev
University, \\ \small \sf Amritsar-143005, Punjab, INDIA\\  {\small \tt
sonumaths@gmail.com}} \maketitle
\begin{abstract}
 Without assuming the field structure on  the additive group of  real numbers $\mathbb{R}$ with the usual order $<,$ we explore the fact that, every proper subgroup of $\mathbb{R}$ is  either closed or dense. This property of the subgroups of the additive-group of reals is special and well known (see Abels and Monoussos [4]). However, by revisiting it, we provide  another  direct proof. We also generalize this result to arbitrary topological groups in the sense that, any topological group having this property of the subgroups, in a given topology, is either connected or totally-disconnected.
\end{abstract}
\parskip=.3cm
\baselineskip18pt
\parindent=0cm
By a topological group, we mean, an
abstract group $G$ which is also a topological space where the two maps $\begin{array}{c}
                                                                      G\times G\rightarrow G \\
                                                                      (x,y)\mapsto xy
                                                                    \end{array}
$ and  $\begin{array}{c}
                         G\rightarrow G \\
                         ~x\mapsto x^{-1}
                       \end{array}
$,  are continuous.
An example is, the
additive group of real numbers $(\mathbb{R}, +)$ with the standard topology of $\mathbb{R}$. 
A subgroup of a topological group is also a topological group in the
subspace topology and, for any fixed elements $a$ and $b$ of $G,$
the map $x\mapsto axb$ is a homeomorphism of $G$ onto itself.
Consequently for any open subset $U$ of $G,$ the subsets
$U^{-1}:=\{x^{-1} ~|~ x\in U\},~ xU:=\{xu ~|~u\in U\},$ and
$\displaystyle WU:=\cup_{w\in W}wU$ with the similar definitions for
$Ux$ and $UW,$ are all open in $G,$ where $W\subset G$. 
Similarly, for any closed subset $V$ of $G,$ 
the subsets $xV,~Vx,~V^{-1}$ are closed in $G$
but each of the sets $WV$  and $VW,$ being arbitrary union of
closed subsets of $G,$ need not be closed in $G$.
It is easy to see that any open subgroup $H$ of $G$ is closed in
$G$ because its compliment $G\sim H$ is open as being union of
left co-sets of $H$ in $G$ each of which is an open subset of $G$.
However, a closed subgroup of $G,$ of finite index, is open in $G$
Also if $H$ is a subgroup of $G$ then so is its closure $\bar{H}$.
To see this, note that, for any $x,~y\in \bar{H}$ and the basis elements $B_x\ni x$ and
$B_y\ni y;$ the open sets $xB_y^{-1},$ $B_x y^{-1}$ contain $xy^{-1}$. Since
$B_xB^{-1}_y\cap H \neq \emptyset$ as $B_x$ and $B_y$ intersect
$H$ nontrivially, it follows that $xy^{-1} \in \bar{H}$. 
It is also easy to observe that any  subgroup $H$ of $G$ that contains an open subset $U\ni
1_G$ is always open, as then, it would be union of the open sets
obtained by the co-sets of $U$ in $H$.
An interesting feature of a topological group $G$ is that, it is
homogeneous, i.e., for any pair of points $x$ and $y$ in $G,$
there is a homeomorphism of $G$ sending $x\mapsto y$ (e.g.
$t\mapsto tx^{-1}y$) (see [1, pp.
95-119], [2, p.219], [3, p.16]). The following Theorem \ref{th1} is an easy consequence of these basic notions about the algebraic structure of the real line with respect to the operation of usual addition (see for a `semigroup version proof' of this result in [4] namely the lemma 2.2-2.3).

\begin{theorem}\label{th1}\baselinestretch
Any proper subgroup of the additive group
$\mathbb{R}$ is either its closed subset or its dense subset.
\end{theorem}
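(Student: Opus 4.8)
The plan is to establish the dichotomy in a slightly sharpened form: a proper subgroup $G$ of $\mathbb{R}$ that fails to be dense must in fact be closed, and must equal $c\mathbb{Z}=\{nc : n\in\mathbb{Z}\}$ for some $c>0$ (or be $\{0\}$). So I would let $G$ be a subgroup which is not dense. Then $\mathbb{R}\setminus\bar G$ is a nonempty open set, hence contains a nonempty open interval; equivalently, there is an interval $(a,b)$ with $\ell:=b-a>0$ and $(a,b)\cap G=\emptyset$.

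The key step, and the one I expect to be the only genuine obstacle, is to upgrade this single missed interval into a gap around the origin, i.e.\ to show $G\cap(0,\ell)=\emptyset$. If some $g\in G$ had $0<g<\ell$, then the whole progression $\{ng:n\in\mathbb{Z}\}\subseteq G$ would have consecutive terms only $g<\ell$ apart, so, taking the least integer $n$ with $ng>a$ (which exists by the Archimedean property of the order on $\mathbb{R}$), one would get $a<ng\le a+g<a+\ell=b$, whence $ng\in(a,b)\cap G$, a contradiction. Applying this to $-g$ as well yields $G\cap(-\ell,\ell)=\{0\}$, so $G$ is discrete, with a uniform gap of width $\ell$ around $0$.

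It then remains to run the classical description of discrete subgroups of $\mathbb{R}$. If $G=\{0\}$ it is closed and we are done; otherwise set $c:=\inf\{x\in G:x>0\}$, and note $c\ge\ell>0$ since every positive element of $G$ is $\ge\ell$. I would first check $c\in G$: if not, the infimum is not attained, so one can pick two distinct elements of $G$ inside $(c,c+\ell)$, and their positive difference lies in $G\cap(0,\ell)$, contradicting the previous step; hence $c\mathbb{Z}\subseteq G$. Conversely, for $g\in G$ with $g>0$, subtracting the largest integer multiple $nc\le g$ (again using the Archimedean property) gives $g-nc\in G$ with $0\le g-nc<c$, so minimality of $c$ forces $g=nc$; the case $g<0$ is symmetric. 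Thus $G=c\mathbb{Z}$, which is closed in $\mathbb{R}$ because its complement is the union of the open intervals $(nc,(n+1)c)$, $n\in\mathbb{Z}$.

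A more conceptual variant, should one wish to avoid the interval bookkeeping, is to argue through the closure: $\bar G$ is again a subgroup by the remarks preceding the theorem, and if $\bar G=\mathbb{R}$ then $G$ is dense; otherwise $\bar G$ is a proper closed subgroup, so applying the same ``no nonzero elements arbitrarily close to $0$'' principle to $\bar G$ makes it discrete, hence $\bar G=c\mathbb{Z}$, and then $G\le c\mathbb{Z}$ is itself of the form $mc\mathbb{Z}$ and therefore closed. In either route everything hinges on the single fact that a subgroup of $\mathbb{R}$ containing elements arbitrarily near $0$ is automatically dense; once that is in hand, the remainder is just the routine structure theory of infinite cyclic subgroups.
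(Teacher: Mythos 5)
Your proof is correct, but it is organized differently from the paper's. The paper proves the implication ``not closed $\Rightarrow$ dense'': it starts from a limit point $x\notin H$ of a non-closed subgroup $H$ and argues (in the main proof via the scaling homeomorphisms $z\mapsto qz$, and in the accompanying remark via a small element $y=x_1-x_2\in H$ obtained from two points of $H$ near $x$) that every interval must meet $H$. You instead prove the contrapositive ``not dense $\Rightarrow$ closed'' in a sharpened, structural form: a missed interval of length $\ell$ forces $G\cap(-\ell,\ell)=\{0\}$ (the Archimedean step $a<ng\le a+g<b$, which is the same pivot as the paper's remark, used in the opposite direction), hence $G$ is discrete, and the standard argument with $c=\inf\{x\in G: x>0\}$ gives $G=\{0\}$ or $G=c\mathbb{Z}$, which is visibly closed. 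In effect you prove the paper's Theorem~\ref{th1} and its Theorem~2 (the characterization via $\alpha=\inf\{|x|:x\in H\setminus\{0\}\}$, with closed nontrivial subgroups cyclic) in one pass, without needing Theorem~\ref{th1} as an input the way the paper does; the price is a little more bookkeeping (attainment of the infimum, the division-algorithm step), whereas the paper's remark-style argument is shorter if one wants only the dichotomy. The only spot where you are slightly glossy is the claim that if $c\notin G$ one can ``pick two distinct elements of $G$ inside $(c,c+\ell)$'': this needs the one-line justification that, the infimum being unattained, one first finds $g_1\in(c,c+\ell)$ and then $g_2\in(c,g_1)$, so that $0<g_1-g_2<\ell$ lies in $G$, contradicting the gap; with that sentence added the argument is complete. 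Your alternative route through $\bar G$ (making $\bar G$ discrete, hence $c\mathbb{Z}$, and then $G\le c\mathbb{Z}$ closed) is also sound and uses the closure-is-a-subgroup observation the paper records before the theorem.
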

\begin{proof}
Let $H$ be a subgroup of $\mathbb{R}$. Assume w.l.o.g that $H$ is not a
closed subset of $\mathbb{R}$. If possible, let us suppose that there is a basis element $(a,b),$
which does not intersect $H$. Then there is a limit point
$x\notin H$ of $H$ in $\mathbb{R}$. So, every basis element
$(c,d)\ni x$ intersects $H$. For each integer $n$ and $t\in H,$
$nx$ and $x+t$ are also limit points of $H,$ since, the maps defined
by $x\mapsto nx$ and $x\mapsto x+t$ are homeomorphisms of
$\mathbb{R}$.
Assume w.l.o.g. that $(a,b)\subsetneq (mx,(m+1)x)$ for some
integer $m$. Then for every positive integer $q,$ satisfying $\displaystyle
q(b-a)>{x},$ there is an integer $p<q$ and a real number $z$
such that $qz=px\in (qa,qb)$ or $z\in(a,b)$. The point $z$ is a
limit point of $H$ since so is $qz$ and the map $z\mapsto qz,$ is a
homeomorphism. But, then $H\cap (a,b)\neq \emptyset$ which is a
contradiction. This completes the proof.
\end{proof}
\begin{remark} Here is more simplified version of the proof of Theorem \ref{th1} based on author's personal communication with Dr. Keerti Vardhan Madahar. Let $x\in  \mathbb{R}-H$ be a limit point of $H$. Choose two points (say $x_1$ and $x_2$) of $H$ which belong to the open set $U = (x-\epsilon/2, x+\epsilon/2)$ of $x$ for some $\epsilon>0$. Now $y = x_1 - x_2$  and all its integral multiples lie in $H$.  Also notice that if $(a, b)$ is an arbitrary basis element of length at least $\epsilon$ then some multiple of $y$ must lie in $(a, b)$ (otherwise there is a positive integer $N$ such that $Ny \leq a<b\leq (N+1)y$ which gives $\epsilon\leq (b-a)\leq y<\epsilon$ a contradiction). That shows $(a, b)$ has a non-trivial intersection with $H$, so $H$ is dense in $\mathbb{R}$.
\end{remark}
The hypothesis of Theorem \ref{th1} does not hold
incase we consider the topological group $\mathbb{R}^n,~n\geq 2$
equipped with the product topology as the subgroup
$\mathbb{Q}^{n-1}\times\{0\}$ is
neither a closed subset of $\mathbb{R}^n$ nor it is a dense subgroup
($\because$ its closure is homeomorphic to $\mathbb{R}^{n-1}$). It is also easy to see that the above result is not true for the multiplicative group $(\mathbb{R}^{\times}, \cdot, <),$ because, the subgroup $\{e^{r}~|~ ~r\in\mathbb{Q}\}$ is neither dense nor a closed subset of $\mathbb{R}^{\times}$.
\begin{remark}
Let $H\neq \{0\}$ be a proper subgroup of the additive group $\mathbb{R}$.  Define $A:=\{|x|: x\in H-\{0\}\}$ and $\alpha:=\inf\{A\}$.

If $H$ is closed in $\mathbb{R}$ then so is $H-\{0\}$ and $\alpha\in \bar{A}\subset \overline{H-\{0\}}=H-\{0\};$ it follows that $\alpha\neq 0$ and $\alpha\in H$. For any $x\in H$ by division algorithm, $|x|=n\alpha+r,~0\leq r<\alpha$ for some positive integer $n$. This means that $r=(|x|-n\alpha)\in H$ such that $r<\alpha$  which is possible only when $r=0$. Hence $H=\alpha\mathbb{Z}$. We have proved that $H$ is cyclic.

Conversely, if $\alpha\neq 0$ then for any $x\in H,$  $|x|=n \alpha+r,~0\leq r<\alpha$ for some positive integer $n$.  We claim that $r=0$. If possible let $r>0$. Using definition of $\inf,$ choose for every $\epsilon>0$ a $y\in H$ s.t. $-n\alpha\in(y,y+\epsilon)$. Take $\epsilon = \alpha+r,$ s.t. $|x|+y\in H$ and $-\alpha=r-\epsilon<|x|+y< r<\alpha$ or $||x|+y|<\alpha$. This is a contradiction since $\alpha=\inf\{A\}$. Thus $r=0$ and $H=\alpha\mathbb{Z}$ which is closed and cyclic subgroup of $\mathbb{R}$.

It is clear that if $\bar{H}=\mathbb{R}$ then $\alpha:=\inf\{A\}=0$. Conversely, if $\alpha=0$ then $H$ is not closed subset of $\mathbb{R}$. Therefore by Theorem \ref{th1}, $H$ is dense in $\mathbb{R}$.

We have proved the following necessary and sufficient conditions for the subgroups of the additive group of reals.
\end{remark}
\begin{theorem}\baselinestretch
Let $H\neq \{0\}$ be a proper subgroup of the additive group $\mathbb{R}$ and $\alpha:=\inf\{|x|: x\in H-\{0\}\}$. Then\\
$(a)$ $H$ is closed in $\mathbb{R}$ if and only if $\alpha\neq 0$. Moreover such a subgroup is cyclic.\\
$(b)$ $H$ is dense in $\mathbb{R}$ if and only if $\alpha=0$.
\end{theorem}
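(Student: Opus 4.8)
The plan is to run everything off the single quantity $\alpha$, using nothing more than the division algorithm on $\mathbb{R}$ together with Theorem~\ref{th1}. Write $A := \{\,|x| : x \in H - \{0\}\,\}$; since $H \neq \{0\}$ this is a nonempty subset of $(0,\infty)$, so $\alpha = \inf A$ is a well-defined real number with $\alpha \geq 0$, and I shall use repeatedly that $r \in A$ forces $\pm r \in H$.

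First I would dispose of the implication $\alpha = 0 \Rightarrow \bar H = \mathbb{R}$, which already gives one half of $(a)$. Given an arbitrary basis element $(a,b)$, choose $y \in H - \{0\}$ with $0 < |y| < b - a$; this is possible precisely because $\alpha = 0$. The arithmetic progression $\{ny : n \in \mathbb{Z}\} \subseteq H$ has common difference $|y| < b - a$, so it meets $(a,b)$, whence $H \cap (a,b) \neq \emptyset$. Thus $\alpha = 0$ makes $H$ dense; since a closed dense subset of $\mathbb{R}$ is all of $\mathbb{R}$ and $H$ is proper, it follows that $H$ closed $\Rightarrow \alpha \neq 0$.

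Next comes the converse of $(a)$: if $\alpha \neq 0$ then $H = \alpha\mathbb{Z}$. The step that I expect to carry the real weight is showing that the infimum is \emph{attained}, i.e.\ $\alpha \in A$. If it were not, then choosing (by definition of $\inf$) a point $h_1 \in A$ with $\alpha < h_1 < 2\alpha$, and then a point $h_2 \in A$ with $\alpha < h_2 < h_1$, I would obtain positive elements $h_1, h_2 \in H$ with $0 < h_1 - h_2 < h_1 - \alpha < \alpha$; but $h_1 - h_2 \in H - \{0\}$, so $h_1 - h_2 \in A$, contradicting $\alpha = \inf A$. Hence $\alpha \in A$, so $\alpha \in H$ and therefore $\alpha\mathbb{Z} \subseteq H$. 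For the reverse inclusion, take $x \in H - \{0\}$ and write $|x| = n\alpha + r$ with $n$ a positive integer and $0 \leq r < \alpha$; then $r = |x| - n\alpha \in H$, and $0 \leq r < \alpha = \inf A$ forces $r = 0$, so $x \in \alpha\mathbb{Z}$. Thus $H = \alpha\mathbb{Z}$, which is cyclic and closed in $\mathbb{R}$ (its points are spaced $\alpha$ apart, so its complement is a union of open intervals). This establishes $(a)$, cyclicity included.

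Finally, $(b)$ is purely formal. Since $H$ is a proper subgroup, Theorem~\ref{th1} says it is closed or dense, and it cannot be both (a closed proper subset of $\mathbb{R}$ is not dense). Hence ``$H$ is dense'' is equivalent to ``$H$ is not closed'', which by part $(a)$ is equivalent to ``$\alpha = 0$''. Equivalently, one reads $(b)$ straight off the two implications proved above: $\alpha = 0$ yields $\bar H = \mathbb{R}$, while $\alpha \neq 0$ yields $H = \alpha\mathbb{Z}$, a closed proper --- hence non-dense --- subgroup.
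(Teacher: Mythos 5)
Your proof is correct, and it is worth recording how it differs from the paper's argument (given in the remark preceding the theorem). The shared core is the division algorithm step: once $\alpha>0$ and $\alpha\in H$ are known, writing $|x|=n\alpha+r$ with $0\le r<\alpha$ forces $r=0$ and $H=\alpha\mathbb{Z}$. Where you diverge is in how the surrounding facts are obtained. The paper extracts $\alpha\in H$, $\alpha\neq 0$ from the hypothesis that $H$ is closed, via $\alpha\in\bar{A}\subset\overline{H-\{0\}}=H-\{0\}$ (a step that quietly assumes $H-\{0\}$ is closed, i.e.\ that $0$ is not a limit point of $H-\{0\}$, which is close to what is being proved), and then, for the converse direction $\alpha\neq0\Rightarrow H$ closed, it avoids knowing $\alpha\in H$ by an ad hoc $\epsilon$-approximation of $-n\alpha$ by elements of $H$. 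You instead prove once and for all that the infimum is \emph{attained} when $\alpha>0$, by the standard trick of subtracting two elements of $A$ squeezed into $(\alpha,2\alpha)$; this single lemma then feeds both directions of $(a)$ cleanly and sidesteps the closedness-of-$H-\{0\}$ issue. You also prove $\alpha=0\Rightarrow\bar H=\mathbb{R}$ directly (multiples of a short element meet every interval, essentially the argument of the paper's Remark~2), so your proof of $(b)$ can dispense with Theorem~\ref{th1} altogether, whereas the paper routes $\alpha=0\Rightarrow$ dense through Theorem~\ref{th1}. The net effect is a more self-contained and slightly more robust argument at the cost of one extra (but standard) lemma; the paper's version is shorter where it can lean on Theorem~\ref{th1}, but its two halves of $(a)$ are argued by unrelated devices. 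One cosmetic remark: when you conclude $h_1,h_2\in H$ from $h_1,h_2\in A$ you are using, as you note, that $H$ is closed under negation; keeping that phrase explicit is what makes $h_1-h_2\in H-\{0\}$ immediate.
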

As an application of the preceding result, we have following special case of Kronecker's (1884) approximation theorem.
\begin{theorem}{\sf (Kronecker)}
For an irrational $\alpha,$ the subgroup $\alpha\mathbb{Z}+\mathbb{Z}$  is dense in $\mathbb{R}$.
\end{theorem}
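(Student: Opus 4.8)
The plan is to obtain this immediately from the dichotomy of Theorem~\ref{th1} together with the infimum criterion established in the theorem immediately preceding the Kronecker statement. Write $H := \alpha\mathbb{Z}+\mathbb{Z}$. The first step is to check that the earlier theorems apply to $H$: it is a subgroup of $(\mathbb{R},+)$ containing $1$, so $H\neq\{0\}$, and it is proper because, being the image of $\mathbb{Z}\times\mathbb{Z}$ under $(m,n)\mapsto m\alpha+n$, it is countable and hence cannot equal $\mathbb{R}$. Thus $H$ satisfies the hypotheses of both preceding theorems.

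Next I would argue by contradiction. Suppose $H$ is not dense in $\mathbb{R}$. By Theorem~\ref{th1}, $H$ is then a closed subset of $\mathbb{R}$, and hence by part~$(a)$ of the preceding theorem $H$ is cyclic, say $H=\beta\mathbb{Z}$ with $\beta:=\inf\{|x|:x\in H-\{0\}\}>0$. Since $1\in H$ and $\alpha\in H$, there are integers $p,q$ with $1=p\beta$ and $\alpha=q\beta$; in particular $p\neq 0$, and dividing gives $\alpha=q/p\in\mathbb{Q}$, contradicting the irrationality of $\alpha$. Therefore $H$ is dense in $\mathbb{R}$.

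I would also record the equivalent direct verification that $\inf\{|x|:x\in H-\{0\}\}=0$, which by part~$(b)$ of the preceding theorem yields density without passing through the contrapositive: given $N\in\mathbb{N}$, among the $N+1$ fractional parts $\{k\alpha\}$, $k=0,1,\dots,N$, two must lie in a common subinterval of $[0,1)$ of length $1/N$, so for some $0\le k<\ell\le N$ one has $|(\ell-k)\alpha-j|<1/N$ for a suitable integer $j$. This exhibits an element $m\alpha-j\in H$ with $m=\ell-k\neq 0$ of absolute value less than $1/N$, and it is nonzero precisely because $\alpha$ is irrational; letting $N\to\infty$ shows the infimum is $0$.

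The argument is short, and the only points needing a word of care are that $H$ is genuinely \emph{proper} (required for the earlier theorems to apply) and that the element produced by the pigeonhole step is nonzero; both hinge on the irrationality of $\alpha$. I do not expect a real obstacle here, since the substantive work has already been packaged into Theorem~\ref{th1} and its refinement.
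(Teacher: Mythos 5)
Your main argument is correct and is essentially the paper's own proof: assume $\alpha\mathbb{Z}+\mathbb{Z}$ is not dense, invoke Theorem~\ref{th1} to get closedness, then the cyclicity statement of the preceding theorem to write it as $\beta\mathbb{Z}$, and derive $\alpha\in\mathbb{Q}$, a contradiction (your explicit check of properness via countability and your supplementary pigeonhole verification that the infimum is $0$ are correct additions, but not a different route).
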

\begin{proof}
  First note that $\alpha\mathbb{Z}+\mathbb{Z}\leq \mathbb{R}$ and so is its closure. If possible let $\alpha\mathbb{Z}+\mathbb{Z}$ be closed in $\mathbb{R}$. Then by preceding theorem, $\alpha\mathbb{Z}+\mathbb{Z}=\beta\mathbb{Z}$ for some nonzero real $\beta$. But then $\mathbb{Z}\subseteq \beta \mathbb{Z}$ and $\alpha\mathbb{Z}\subseteq\beta\mathbb{Z}$. This gives $1=\beta n$ and $\alpha=\beta m$ for some $0\neq m,n\in\mathbb{Z}$ from which we obtain $\beta=\frac{1}{n}$ and $\alpha=\beta m=\frac{m}{n}\in\mathbb{Q}$.  This contradicts the fact that $\alpha\in\mathbb{R}-\mathbb{Q}$.
\end{proof}
\begin{example}
 If we let $S:=\{\sin n~|~n\in\mathbb{Z}\}$ then closure of $S$ in $\mathbb{R}$ is the interval $[-1,1]$. We prove this well known fact (see for detail [5]) using the preceding result. For any real $r\in[-1,1],$ and $\alpha=2\pi$ there exist $s\in [-\pi/2,\pi/2]$ s.t. $\sin s= r$ and integer-sequences  $\{p_n\}$ and $\{q_n\}$ such that $2\pi p_n+q_n\rightarrow s$.
 By continuity of $\sin$ function, we have $\sin (2\pi p_n+q_n)\rightarrow \sin s=r$ or $\sin q_n\rightarrow r$. Thus every point of the interval $[-1,1]$ is limit of some subsequence of the sequence $\{\sin n\}$.
\end{example}




 We now establish a general result concerning the topological groups having the nice property of Theorem \ref{th1}.
\begin{theorem}\label{th3}\baselinestretch
Let $G$ be a topological group such that\\
$(a)$ $G$ has a proper dense subgroup $H$\\
$(b)$ any proper subgroup $K$ of $G$ is such that either $\bar{K}=K$ or $\bar{K}=G$.\\
Then $G$ is either connected or totally disconnected.
\end{theorem}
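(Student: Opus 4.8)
The plan is to pass to the connected component $C$ of the identity $1_G$. This $C$ is a closed normal subgroup of $G$, and by homogeneity the component of an arbitrary $g\in G$ is the coset $gC$; hence $G$ is connected exactly when $C=G$, and totally disconnected exactly when $C=\{1_G\}$. So it suffices to show that the intermediate situation $\{1_G\}\subsetneq C\subsetneq G$ cannot occur under hypotheses $(a)$ and $(b)$; assume it does, and aim for a contradiction. One quick preliminary: $(b)$ already forces $G$ (hence $C$) to be Hausdorff, since $\{1_G\}$ is a subgroup, so $\overline{\{1_G\}}$ is $\{1_G\}$ or $G$, and in the second case every neighbourhood of $1_G$ is all of $G$, so the topology on $G$ is indiscrete and $G$ is connected, contrary to $C\subsetneq G$.

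The heart of the reduction is the following consequence of $(b)$: every subgroup $K$ of $C$ is closed in $C$. Indeed, such a $K$ is a \emph{proper} subgroup of $G$ because $C\subsetneq G$, so by $(b)$ either $\overline{K}=K$ or $\overline{K}=G$; but $\overline{K}\subseteq\overline{C}=C\subsetneq G$, which rules out the second possibility and gives $\overline{K}=K$. Thus $C$ is a nontrivial connected Hausdorff topological group in which \emph{every} subgroup is closed — in particular $C$ has no proper dense subgroup and no proper open subgroup. The role I expect hypothesis $(a)$ to play is to interact with $C$ here: the dense proper subgroup $H$ cannot be contained in $C$ (else $G=\overline{H}\subseteq C$), and one then plays $H$ off against $C$ — when $C\not\subseteq H$ the intersection $H\cap C$ is a proper subgroup of $C$, and when $C\subseteq H$ the subgroup $C$ is precisely the identity component of the topological group $H$, which still satisfies $(b)$ — with the goal of producing from $H$ a proper subgroup of $C$ that fails to be closed.

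Everything therefore comes down to the claim that a nontrivial connected topological group cannot have all of its subgroups closed, and I expect this to be the main obstacle. Granting it for $C$, we get a subgroup $K\le C$ with $\overline{K}\ne K$; then $\overline{K}\subseteq C\subsetneq G$, so $K$ is a proper subgroup of $G$ that is neither closed nor dense, contradicting $(b)$ and finishing the proof. To establish the claim I would start from an $x\in C$ with $x\ne 1_G$ and examine the cyclic subgroup $\langle x\rangle$: if it is not closed we are done, and if it is closed then — using that a connected group has no proper open subgroup, hence no proper closed subgroup of finite index — one deduces that $\langle x\rangle$ has infinite index, that $C$ is not topologically monothetic, and that any nontrivial totally disconnected closed normal subgroup of $C$ would have to be central. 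The delicate step, where I expect the genuine difficulty to sit, is to convert this structural information into an actual non-closed subgroup of $C$: the natural route is to descend into a suitable closed abelian (in fact divisible) subgroup of $C$ and locate there a dense cyclic or one-parameter subgroup, invoking hypothesis $(a)$ to guarantee that such a dense, hence non-closed, proper subgroup is available.
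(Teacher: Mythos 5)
Your reduction is set up correctly: the dichotomy is equivalent to excluding $\{1_G\}\subsetneq C\subsetneq G$, the indiscrete/Hausdorff preliminary is fine, and the observation that every subgroup $K\le C$ satisfies $\overline{K}\subseteq\overline{C}=C\subsetneq G$, hence is closed by $(b)$, is valid. But after this the argument stops: everything is made to rest on the claim that a nontrivial connected Hausdorff topological group must possess a non-closed subgroup, and you do not prove that claim — you explicitly flag it as the main obstacle and offer only a sketch. That sketch does not close the gap: one-parameter subgroups, monothetic subgroups, divisibility and ``descending to a closed abelian subgroup'' are tools of locally compact/Lie structure theory and are not available in an arbitrary topological group (note the theorem is stated for arbitrary $G$, e.g.\ groups that are torsion, non-metrizable, non-Baire). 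Your proposed way of bringing in hypothesis $(a)$ also fails at the decisive point: density of $H$ in $G$ gives no control inside $C$, because $C$ is only closed, not open — $H\cap C$ can perfectly well fail to be dense in $C$ (it could even be trivial), so $(a)$ does not ``guarantee that a dense, hence non-closed, proper subgroup is available'' in $C$. A further warning sign: if your key claim were established, the theorem would follow from $(b)$ alone, with $(a)$ essentially unused; you would be proving something substantially stronger than the statement, which is usually where the hidden difficulty lives.

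The paper avoids that hard claim entirely by exploiting openness. Assuming $G$ is not connected, it first supposes some nonempty proper \emph{open connected} subset $U$ exists; translating $U$ to the identity shows $C$ contains an open neighbourhood of $1_G$, so $C$ is open. Only then does $(a)$ enter: since $C$ is open and $H$ is dense in $G$, the subgroup $H\cap C$ is dense in $C$; it is a proper subgroup of $G$, so by $(b)$ it is closed (it cannot be dense in $G$, its closure lying in $C$), forcing $H\cap C=C$, i.e.\ $C\subseteq H$; then $H$ is open, hence closed, contradicting density of the proper subgroup $H$. In the remaining case there are no nonempty proper open connected subsets and, by $(a)$, no open singletons, and one argues that the coset $yC$ for $y\notin C$, being a connected subset of the proper open set $G-C$, must be a singleton, so $C=\{1_G\}$. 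The openness of $C$, which makes density of $H$ transfer into $C$, is exactly the leverage your reduction discards; without it you are left needing a general theorem about connected groups that you have not supplied (and whose truth for arbitrary topological groups is far from evident). As it stands, the proposal is an honest partial reduction, not a proof.
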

\begin{proof}
 Let $G$ is not connected and  $C\neq G$ be the connected component of $G$ containing the identity element. If there is a connected open subset $U\neq G$ of $G$ containing a point $x$  of $G$ then so is the open set $x^{-1}U\ni 1_G$ such that $x^{-1}U\cap C\neq \emptyset;$ consequently, $x^{-1}U\cup C$ is connected and hence $C\supset x^{-1}U$. It follows that $C$ is open in $G;$ as $C\leq G$ and $\bar{H}=G$ we see using the hypothesis (b) that $\overline{H\cap C}=\bar{C}=C$ is closed subset of $G$ which gives $\overline{H\cap C}={H\cap C}=C$ or $C\subset H$. But then $H$ is open subgroup of $G$ and since every open subgroup of a topological group is also a closed subset, it follows that $\bar{H}=H$ contradicting the fact that $H$ was assumed to be dense subgroup of $G$. We have proved that there does not exist any connected open subset of $G$ and by (a) none of the singleton set in $G$ can be open. It follows that every proper open subset of $G$ is totally disconnected.
Finally, since $C$ is closed subset of $G$ and if $C\neq G,$ it follows that  $G-C$ is totally disconnected. But then for every $y\in G-C,$ $yC=\{y\}$ which is possible only if $C=\{1_G\}$. 
  This proves that $G$ is totally disconnected.
\end{proof}

In view of the hypothesis of Theorem \ref{th3} through Theorem \ref{th1}, we see that $\mathbb{R}$ is connected in {`usual topology'}, `indiscrete topology', and {`finite-closed topology'} while $\mathbb{R}$ is totally disconnected in `the lower limit topology'. In the latter topolgy we see that  every interval in $\mathbb{R}$ is totally disconnected!

\subsection*{\sf Acknowledgement}
The author is indebted to Dr. Keerti Vardhan Madahar for useful discussion on proof of theorem \ref{th1}.

\subsection*{\sf References:}

[1] L. S. Pontryagin. \emph{Selected Works: Topological Groups Vol
2.} (English translation by A. Brown), Gordon and Breach Science
Publishers, New York, 1986.

[2] N. B\"ourbaki. \emph{Elements of Mathematics: Part I-General
Topology}, Addison Wesley, 1966.

[3] P. J. Higgins. \emph{Introduction to Topological Groups,
Lecture notes 15}, London Mathematical Society. Cambridge
University Press, 1974.

[4] H. Abels and A. Manoussos. Topological generators of abelian Lie groups and hypercyclic finitely generated abelian semigroups of matrices. (2010) pp 1-14.

[5] J. H. Staib and M. S. Demos. On the limit points of the sequence $\sin n$. \emph{Mathematics Magazine}, \textbf{40}:4 (1967) pp 210-213.

\end{document}